\setlist[enumerate]{label=\rm{(\arabic*)}, ref=(\arabic*)}
\newcommand{\NN}{\mathbb{N}}
\newcommand{\concat}{%
  \mathord{
    \mathchoice
    {\raisebox{1ex}{\scalebox{.7}{$\frown$}}}
    {\raisebox{1ex}{\scalebox{.7}{$\frown$}}}
    {\raisebox{.7ex}{\scalebox{.5}{$\frown$}}}
    {\raisebox{.7ex}{\scalebox{.5}{$\frown$}}}
  }
}
\newtheorem{thm}{Theorem}
\newtheorem*{thm*}{Theorem}
\newtheorem{lemma}[thm]{Lemma}
\newtheorem{prop}[thm]{Proposition}
\theoremstyle{definition}
\newtheorem{defn}[thm]{Definition}
\newtheorem{rem}[thm]{Remark}
\newtheorem{example}[thm]{Example}
\newenvironment{fsa}[1][auto]{
    \begin{tikzpicture}[#1,->,>=Stealth,shorten >=1pt,auto,
      node distance=2.8cm,semithick,double equal sign distance]
      \tikzstyle{state}=[fill=none,draw=black,text=black, shape=circle]}{%
    \end{tikzpicture}}
\def\9{{\mathbf 0}}
\def\8{{\mathbf 1}}
\def\7{{\mathbf 2}}
\title{Infinite finitely generated automata semigroups have infinite orbits}
\author{Dominik Francoeur}
\date{\today}
\begin{document}

\begin{abstract}
We prove that the semigroup generated by a finite state Mealy automaton $\mathcal{A}=(Q,A,\tau)$ is infinite if and only if there exists some right-infinite word in the alphabet $A$ with infinite orbit.
\end{abstract}

\maketitle

\section{Introduction}

A Mealy automaton is a machine that takes as input a letter from an alphabet $A$ and, depending on its currently active state, outputs a letter and modifies its active state (see Figure \ref{figure:MooreDiagram} for an example).
\begin{figure}[h]
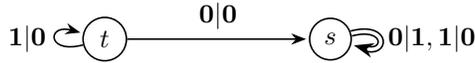

\centering
\[\begin{fsa}[baseline]
    \node[state] (t) at (0,0) {$t$};
    \node[state] (s) at (3,0) {$s$};
    \path (t) edge node {$\9|\9$} (s)
    	  (t) edge [loop left] node {$\8|\9$} (t)
          (s) edge [double,loop right] node {$\9|\8,\8|\9$} (s);
  \end{fsa}
\]
\caption{The Moore diagram of a Mealy automaton.}\label{figure:MooreDiagram}
\end{figure}

Such a machine defines a semigroup acting on the Cantor set $A^\omega$. The generators correspond to the states of the automaton ($\{s,t\}$ in the example) and the image of $a_0a_1a_2\dots\in A^\omega$ by the generator $q$ is the unique right-infinite word $b_0b_1b_2\dots\in A^\omega$ such that $(a_0|b_0)(a_1|b_1)\dots$ is a path starting at $q$.

Despite being produced by very simple rules, semigroups generated by Mealy automata can be quite rich and have some striking properties. For instance, one can find among this class finitely generated infinite torsion groups \cite{Grigorchuk80} as well as groups \cite{Grigorchuk83} and semigroups \cite{BartholdiReznykovSuschansky06} of intermediate growth.

One can wonder what sort of restrictions being recursively defined from a finite amount of data imposes on the algorithmic, structural or dynamical properties of semigroups generated by finite state Mealy automata. D'Angeli, Rodero and Wächter recently asked (\cite{AngeliRoderoWachter17}, Open Problem 3) whether such a semigroup could be infinite while every orbit of its canonical action on the Cantor set is finite. In this note, we prove that this is not possible (Theorem \ref{thm:ExistInfiniteOrbit}):

\begin{thm*}
Let $S$ be the semigroup of endomorphisms of the Cantor set $A^\omega$ generated by a finite state Mealy automaton $\mathcal{A}=(Q,A,\tau)$. Then $S$ is infinite if and only if there exists $\xi \in A^\omega$ such that $|S\cdot \xi| = \infty$.
\end{thm*}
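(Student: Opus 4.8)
The plan is to prove the two implications separately; the reverse one is immediate and the forward one carries all the content.

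\smallskip
\textit{Reductions.} For the reverse implication, if some $\xi$ satisfies $|S\cdot\xi|=\infty$ then $s\mapsto s\cdot\xi$ is a map $S\to A^\omega$ with infinite image, so $S$ is infinite. For the forward implication I would set up the standard level machinery. Since each generator acts as a length- and prefix-preserving endomorphism, $S$ acts on every finite level $A^n$; write $S_n$ for the (finite) semigroup of induced transformations of $A^n$ and $\pi_n\colon S\to S_n$ for the restriction. As an element is determined by its action on all finite levels, $S\hookrightarrow\varprojlim_n S_n$, so $S$ is infinite iff $|S_n|\to\infty$. Dually, writing $(S\cdot\xi)_n\subseteq A^n$ for the set of length-$n$ prefixes of the orbit, one has $|S\cdot\xi|=\sup_n|(S\cdot\xi)_n|$, so an orbit is infinite iff $|(S\cdot\xi)_n|\to\infty$; set also $m_n=\max_{w\in A^n}|S_n\cdot w|$. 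Thus the substance is the implication $(\star)$: $|S_n|\to\infty$ implies some orbit is infinite. I stress that $(\star)$ is \emph{false} for general finitely generated transformation semigroups (a single ``add one on each cyclic block'' map generates an infinite semigroup with all orbits finite), so the proof must genuinely use that $\mathcal A$ is finite, i.e.\ self-similarity. I would split $(\star)$ according to whether the orbit sizes are bounded: either $m_n\le M$ for all $n$, or $m_n\to\infty$.

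\smallskip
\textit{Section calculus.} The engine is the self-similar structure. For $s\in S$ and $w\in A^*$ let $s|_w\in S$ be the section describing the action of $s$ on the subtree below $w$; sections of generators are generators, so every $s|_w$ lies in $S$, and $(st)|_w=s|_{t\cdot w}\,t|_w$. For a word $w$ and an output $\zeta\in A^{|w|}$ put $\mathcal T^\zeta_w=\{s|_w:\ s\cdot w=\zeta\}\subseteq S$; when $\zeta=w$ this is the stabiliser-section subsemigroup $H_w\le S$, and these obey the recursion $H_\emptyset=S$, $H_{wa}=\{t|_a:\ t\in H_w,\ t\cdot a=a\}$. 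The crucial point is the refinement identity $|(S\cdot wv)_{n+k}|=\sum_{\zeta\in(S\cdot w)_n}|(\mathcal T^\zeta_w\cdot v)_k|$, which says that the action below a node is a faithful copy of the action of the section data: unbounded orbit growth deep in the tree forces some $\mathcal T^\zeta_w$, and in particular some $H_w$, to have unbounded orbits as well. I would also record the crude bound $|S\cdot(a\eta)|\le|A|\,|S\cdot\eta|$.

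\smallskip
\textit{Bounded case.} Here I would prove the contrapositive ``$m_n\le M \Rightarrow |S_n|$ bounded''. If every orbit has at most $M$ points, then each $s\in S$ restricts to a self-map of every (invariant) orbit of size $\le M$, which is eventually periodic with index and period at most $M$; hence $s^{M}=s^{M+L}$ as transformations of $A^\omega$, with $L=\operatorname{lcm}(1,\dots,M)$. Thus $S$ is a finitely generated semigroup of bounded index and period, and it is residually finite because $S\hookrightarrow\varprojlim_n S_n$. I would then conclude that $S$ is finite by a restricted-Burnside-type local-finiteness input (finitely generated residually finite semigroups of bounded index and period are finite — reducing, via the maximal subgroups of bounded exponent, to Zelmanov's theorem). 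This already contradicts $|S_n|\to\infty$, so under $(\star)$ we may assume $m_n\to\infty$.

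\smallskip
\textit{Unbounded case, and the main obstacle.} Assuming $m_n\to\infty$, the orbit sizes are unbounded, i.e.\ $\sup_\eta|S\cdot\eta|=\infty$, and I would build a single ray $\xi=a_0a_1\cdots$ with $|(S\cdot\xi)_n|\to\infty$. Run a greedy construction keeping an active set $\mathcal T_n=\mathcal T^{\zeta_n}_{\xi_n}$ of \emph{infinite capacity} $\sup_\eta|\mathcal T_n\cdot\eta|=\infty$; the refinement identity together with the finiteness of $A$ lets one always choose the next letter and output preserving infinite capacity (pigeonhole over finitely many first letters and outputs). \textbf{The heart of the matter} is that maintaining infinite capacity need not make the \emph{realised} orbit along $\xi$ grow: a priori the complexity could hide in finite side-branches (a ``spine carrying infinite capacity while the orbit on the ray stays bounded''), which is exactly the pattern realisable for general transformation semigroups and which finiteness of $\mathcal A$ must forbid. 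I expect to exclude it by descent on the section subsemigroups: infinite capacity below a node forces some $H_w$ to be an infinite subsemigroup acting self-similarly on the subtree, and the recursion $H_w\rightsquigarrow H_{wa}$ passes either to a subsemigroup that is again infinite and self-reproducing at a letter (which yields an infinite orbit along the associated periodic ray directly, as for the odometer) or to a strictly simpler one; choosing a finite complexity measure built from $Q$ (for instance the realised set of first-level activities) that strictly drops under proper passage makes the descent well founded, and each descent step corresponds to a genuine split in the refinement sum, so the realised orbit does grow. A final application of K\"onig's lemma assembles the locally guaranteed growth into one ray. Everything else (the easy direction, the reductions, the section calculus, and the K\"onig assembly) is routine; the principal obstacle is precisely certifying well-foundedness of this descent, which is where the finite-state hypothesis is indispensable.
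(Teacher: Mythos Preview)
Your bounded case contains a genuine error. The claim that ``finitely generated residually finite semigroups of bounded index and period are finite'' is false, and no reduction to Zelmanov can repair it. A concrete counterexample: let $S$ be the Rees quotient of the free semigroup $\{a,b\}^+$ by the ideal of words containing a cube. Then $S$ is generated by $a,b$; it is infinite because there are infinitely many cube-free words over two letters (Thue); every element satisfies $x^3=x^4=0$, so index and period are globally bounded; and $S$ is residually finite via the finite Rees quotients by the ideals $\{0\}\cup\{\text{words of length}>n\}$. The maximal subgroups here are all trivial, so the reduction you sketch has nothing to bite on. The correct argument for this step is elementary and uses no Burnside-type input: if every orbit has size at most $M$, then the \emph{pointed} $S$-set $(S\cdot w,w)$ has only finitely many isomorphism types (it is determined by the images of the finitely many generators in $\mathrm{End}(\{1,\dots,k\})$ for some $k\le M$), and two elements of $S$ agreeing on one representative of each type agree everywhere; hence $S$ is finite. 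This is exactly the content of the lemma ``$m_\epsilon<\infty\Rightarrow S$ finite'' in the paper.

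Your unbounded case is, as you concede, only a plan; the ``descent with a complexity measure built from $Q$'' is not specified, and nothing you wrote rules out the spine-with-side-branches scenario. The paper's device for this is clean and worth knowing. For each $v$ it packages the action on the finite orbit $S\cdot v$ together with \emph{all} sections into a single homomorphism
\[
\psi_v\colon S\longrightarrow \mathrm{End}(S\cdot v)\ltimes S^{S\cdot v},\qquad s\longmapsto\bigl(s\cdot,\ w\mapsto s@w\bigr),
\]
and observes that $\psi_v(Q)\subseteq \mathrm{End}(S\cdot v)\times Q^{S\cdot v}$, a finite set. The crucial lemma is that if two vertices $v_1,v_2$ have orbits identified by a bijection $\varphi$ sending $v_1$ to $v_2$ and conjugating $\psi_{v_1}$ to $\psi_{v_2}$, then $m_{v_1\concat w}=m_{v_2\concat w}$ for every $w$. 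Now argue by contradiction: if along the tree of vertices with $m_v=\infty$ the orbit sizes $|S\cdot v|$ never strictly increase past some $w$, they are all equal to $|B|:=|S\cdot w|$; conjugating everything to $B$, the homomorphisms $\psi_v^{\varphi_v}$ take only finitely many values (finitely many images of $Q$ in the finite set $\mathrm{End}(B)\times Q^{B}$), so the numbers $m_{v\concat a}$ for $v$ in this tree and $a\in A$ form a finite set. Taking $M$ to be the largest finite one (or $|B|$ if there are none) bounds $|S\cdot x|$ for every $x\ge w$, contradicting $m_w=\infty$. This is the precise ``finite complexity measure'' you were reaching for: it is the conjugacy class of $\psi_v$ restricted to generators, and finiteness of $Q$ is used exactly to make the target $\mathrm{End}(B)\times Q^{B}$ finite.
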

In fact, our proof yields the slightly stronger following result (Theorem \ref{thm:WorksForSubgroups}):
\begin{thm*}
Let $S$ be as above. Then a subsemigroup $T\leq S$ is infinite if and only if there exists $\xi \in A^\omega$ such that $|T\cdot \xi| = \infty$.
\end{thm*}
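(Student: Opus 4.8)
The easy direction is immediate: if $|T\cdot\xi|=\infty$ then $T$ is infinite, because $T\cdot\xi$ is the image of $T$ under $t\mapsto t\cdot\xi$, so $|T\cdot\xi|\le|T|$. For the converse I would first set up the faithful action on the rooted tree $A^{*}$. Each element of $S$ is a prefix- and length-preserving endomorphism, determined by its restrictions $s|_{w}\in S$ together with the letter maps it induces, and these restrictions obey the chain rule $(st)|_{a}=s|_{t(a)}\,t|_{a}$; in particular the restrictions of a product of $n$ generators are again products of $n$ generators drawn from the finite state set $Q$. Writing $\pi_{n}\colon S\to\operatorname{End}(A^{\le n})$ for the truncation to the first $n$ levels, the image is finite, $s=t$ holds iff $\pi_{n}(s)=\pi_{n}(t)$ for all $n$ (residual finiteness), and since $\pi_{n+1}(T)\twoheadrightarrow\pi_{n}(T)$, the sizes $|\pi_{n}(T)|$ are non-decreasing. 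Hence $T$ is infinite if and only if $|\pi_{n}(T)|\to\infty$.

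The plan is then to translate orbits of infinite words into orbits of finite prefixes. For $\xi=a_{0}a_{1}\cdots$, the length-$n$ prefix of $t\cdot\xi$ equals $\pi_{n}(t)$ applied to $a_{0}\cdots a_{n-1}$, so the set of length-$n$ prefixes of $T\cdot\xi$ is exactly the orbit of $a_{0}\cdots a_{n-1}$ under the finite transformation semigroup $\pi_{n}(T)$ acting on $A^{n}$. A finite set of infinite words is separated by long enough prefixes, so $|T\cdot\xi|=\sup_{n}|\pi_{n}(T)\cdot(a_{0}\cdots a_{n-1})|$, and $\xi$ has infinite orbit precisely when these prefix-orbits grow without bound. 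The theorem thus reduces to the purely combinatorial assertion: if $|\pi_{n}(T)|\to\infty$, then some ray $\xi$ has prefix-orbits of unbounded size.

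For the main (contrapositive) direction I would assume every orbit is finite and derive finiteness of $T$. The sets $C_{N}=\{\xi:|T\cdot\xi|\le N\}$ are closed — each is the intersection, over all $(N{+}1)$-tuples from $T$, of the closed sets on which two of the maps agree — and they cover the compact space $A^{\omega}$, so by the Baire category theorem some cylinder $[u]$ lies in some $C_{N}$, giving a uniform orbit bound above $u$. I would then exploit the finiteness of $Q$: since restrictions of elements of $T$ are products of the same length in the finitely many states, along any ray the sequence of restriction-states is governed by a finite transition structure and must eventually repeat. A pigeonhole/König argument on these repeating restriction-patterns is meant to show that the unbounded growth of $\pi_{n}(T)$ cannot be spread thinly over the tree but must accumulate along a single ray, where it manufactures prefix-orbits of unbounded size — contradicting the uniform bound and forcing $T$ to be finite.

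The hard part is exactly this last concentration step, and it is where both hypotheses are indispensable. For an arbitrary coherent sequence of finite transformation semigroups the growth can be arranged so that every point has bounded orbit; ruling this out requires that the $\pi_{n}(T)$ all come from one \emph{finite} automaton. The difficulty is aggravated by the fact that $T$ is merely a subsemigroup: it is not closed under taking restrictions, and it consists of possibly non-invertible maps, so I cannot pass to a self-similar subsemigroup of sections, nor can I fall back on group-theoretic tools such as rigid stabilizers. The core of the argument will therefore be a careful bookkeeping of the finitely many restriction-states — recording, for each word $w$, which states of elements of $T$ can appear at $w$ and how they act on the next letter — combined with a pumping argument that converts a repetition of restriction-states along a ray into infinitely many distinct orbit points. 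I expect this pumping, executed uniformly over the whole of $T$ rather than a single cyclic subsemigroup, to be the main technical obstacle.
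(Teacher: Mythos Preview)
Your outline has a genuine gap: the ``concentration/pumping'' step is a placeholder, not an argument, and you yourself flag it as the main obstacle. Moreover the Baire step, while correct, buys nothing towards the conclusion. It produces a single cylinder $[u]$ on which $T$-orbits are uniformly bounded, but says nothing about rays outside $[u]$; a ray with unbounded prefix-orbits may perfectly well live elsewhere, so there is nothing to contradict. What you actually need is the bare implication ``$|\pi_n(T)|\to\infty\Rightarrow$ some ray has unbounded prefix-orbits'', and if you had that, Baire would be superfluous. So all of the content sits in the step you have not done.

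The paper's proof does not go through Baire or pumping. For each $v\in A^{*}$ one considers the homomorphism
\[
\psi_v\colon T\longrightarrow \mathrm{End}(T\cdot v)\ltimes S^{\,T\cdot v},\qquad t\longmapsto\bigl(t\cdot,\; w\mapsto t@w\bigr),
\]
recording both the action of $t$ on the finite orbit $T\cdot v$ and its section at every point of that orbit (the second factor is $S^{\,T\cdot v}$, not $T^{\,T\cdot v}$: sections of elements of $T$ lie in $S$ but need not lie in $T$, exactly the difficulty you identify). If $\psi_{v_1}$ and $\psi_{v_2}$ agree up to a bijection of orbits sending $v_1$ to $v_2$, then $\sup_\xi|T\cdot(v_1\concat\xi)|=\sup_\xi|T\cdot(v_2\concat\xi)|$. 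The crux is that a finite generating set for $T$ is sent by $\psi_v$ into the \emph{finite} set $\mathrm{End}(T\cdot v)\times Q^{\,T\cdot v}$, so along any subtree on which $|T\cdot v|$ stays constant there are only finitely many $\psi_v$ up to conjugation; this bounds the possible values of $\sup_\xi|T\cdot(v\concat a\concat\xi)|$ and forces a contradiction. That finiteness is what replaces your hoped-for pumping. Note also that the paper's proved version of the statement explicitly assumes $T$ is \emph{finitely generated}, and this hypothesis is used precisely at the step just described; your sketch never invokes it.
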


In Section \ref{sec:Preliminaries}, we briefly define the relevant notions and set the notation. The theorem is proved in Section \ref{sec:Proof}. Section \ref{sec:Remarks} contains remarks and examples related to possible generalisations of this result.

\subsection*{Acknowledgement} The author would like to thank Laurent Bartholdi and Ivan Mitrofanov for many useful remarks and discussions. This work was supported by a Doc.Mobility grant from the Swiss National Science Foundation as well as the "@raction" grant ANR-14-ACHN-0018-01 while the author was visiting the École Normale Supérieure in Paris.

\section{Preliminaries}\label{sec:Preliminaries}

\subsection{Alphabets and words}

Let $A$ be a finite set and $A^*$ be the set of words in the alphabet $A$. The concatenation between two words $u,v\in A^*$ will be denoted by $u\concat v$. There is a natural prefix order on $A^*$ given by $u\leq v$ if $u$ is a \emph{prefix} of $v$, that is, if there exists a (possibly empty) word $w\in A^*$ such that $v = u\concat w$.

We will denote by $A^\omega$ the set of right-infinite sequences of letters in the alphabet $A$. The concatenation of a finite word $u\in A^*$ on the left and a right-infinite sequence $\xi \in A^\omega$ on the right will be written as $u\concat \xi$. Given $u\in A^*$ and $\xi \in A^\omega$, we will write $u\leq \xi$ if $u$ is a prefix of $\xi$ (i.e. if there exists $\zeta \in A^\omega$ such that $\xi = u\concat \zeta$). Given a strictly increasing sequence $(u_i)_{i\in\NN}$ of elements of $A^*$, there exists a unique $\xi\in A^\omega$ such that $u_i \leq \xi$ for all $i\in \NN$.

\subsection{Mealy automata}

A \emph{Mealy automaton} is a tuple $\mathcal{A} = (Q,A,\tau)$, where $Q$ is a set called the \emph{set of states}, $A$ is a finite alphabet and $\tau$ is a map
\[
\tau \colon \begin{cases}
Q\times A &\rightarrow A \times Q \\
(q,a) &\mapsto (q\cdot a, q@a).
\end{cases}
\]
If $Q$ is finite, $\mathcal{A}$ is called a \emph{finite state} Mealy automaton.

A Mealy automaton can be represented by its \emph{Moore diagram}, which is a directed labelled graph with set of vertices $Q$. For each $q\in Q$ and $a\in A$, there is an edge from $q$ to $q@a$ labelled by $a|(q\cdot a)$. See Figure \ref{figure:MooreDiagram} for an example.

\subsection{Semigroup generated by a Mealy automaton}

Let $\mathcal{A} = (Q,A, \tau)$ be a Mealy automaton. For all $q\in Q$, we have a map $q\cdot \colon A \rightarrow A$ that sends $a$ to $q\cdot a$. We can inductively extend this map to a map $q\cdot \colon A^* \rightarrow A^*$ by
\[q\cdot (a\concat u) = (q\cdot a)\concat ((q@a)\cdot u)\]
for $a\in A$ and $u\in A^*$. Thus, $Q$ can be seen as a set of endomorphisms of $A^*$. We will denote by $S = \langle Q \rangle_+ \leq \text{End}(A^*)$ the semigroup generated by $Q$.

For all $u\in A^*$, we can inductively define a map $@u \colon Q \rightarrow Q$ by the formula
\[q@u = q@(a\concat v) = (q@a)@v\]
where $a\in A$, $v\in A^*$ are such that $u=a\concat v$. This map extends to a map $@u\colon S\rightarrow S$ by the formula
\[(st)@u = (s@(t\cdot u))(t@u).\]
We then see that for all $u,v\in A^*$ and all $s\in S$, we have
\[s\cdot (u\concat v) = (s\cdot u)\concat((s@u)\cdot v).\]

Given $s\in S$ and a strictly increasing sequence $(u_i)_{i\in \NN}$ of elements of $A^*$, it follows from the above formula that the sequence $(s\cdot u_i)_{i\in\NN}$ is also strictly increasing. Thus, the action of $S$ on $A^*$ extends to an action of $S$ on $A^\omega$. We then have
\[s\cdot (u\concat \xi) = (s\cdot u)\concat ((s@u)\cdot \xi)\]
for all $s\in S$, $u\in A^*$ and $\xi \in A^\omega$.

It is easy to construct a Mealy automaton such that the semigroup $S$ it generates is infinite, but for every $\xi \in A^\omega$, the orbit $S\cdot \xi$ of $\xi$ under the action of $S$ is a finite set (see for example Figure \ref{figure:FiniteOrbits}).

\begin{figure}
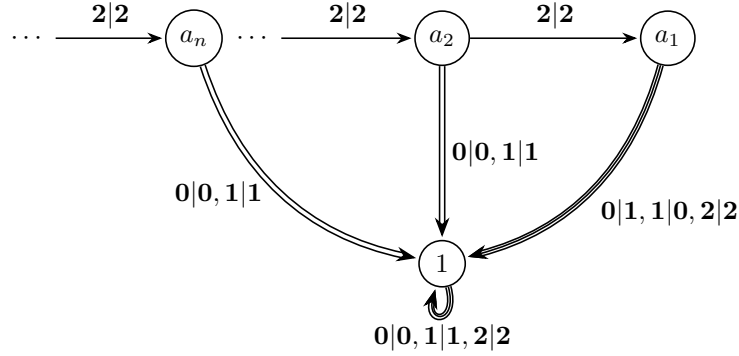

\centering

\[\begin{fsa}[baseline]
    \node[state] (a) at (3,0) {$a_1$};
    \node[state] (b) at (0,0) {$a_2$};
    \node[state] (c) at (-3.3,0) {$a_n$};
    \node (l1) at (-2.5,0) {$\cdots$};
    \node (l2) at (-5.5, 0) {$\cdots$};
    \node[state] (e) at (0,-3) {$1$};
    \path (a) edge [double, bend left] node {$\9|\8, \8|\9, \7|\7$} (e)
    	  (a) edge [bend left] (e)
    	  (b) edge [double] node {$\9|\9, \8|\8$} (e)
    	  (b) edge node {$\7|\7$} (a)
    	  (l1) edge node {$\7|\7$} (b)
    	  (l2) edge node {$\7|\7$} (c)
    	  (c) edge [double, bend right] node[below, left, xshift=-4] {$\9|\9, \8|\8$} (e)
          (e) edge [double,loop below] node {$\9|\9,\8|\8, \7|\7$} (e)
          (e) edge [loop below] (e);
  \end{fsa}
\]
\caption{A Mealy automaton generating an infinite semigroup for which every orbit is finite.}\label{figure:FiniteOrbits}
\end{figure}

However, as we will prove in the next section, in the case of a finite state Mealy automaton, there must exist an infinite orbit as soon as the semigroup it generates is infinite.

\section{Proof of the theorem}\label{sec:Proof}

As in the previous section, let $\mathcal{A} = (Q,A,\tau)$ be a Mealy automaton and $S$ be the semigroup it generates. In what follows, we will assume that $Q$ is finite.

\begin{defn}
For $v\in A^*$, we define $m_v\in \NN\cup \{\infty\}$ by
\[m_v = \sup_{\xi\in A^{\omega}}|S\cdot (v\concat \xi)|.\]
\end{defn}
\begin{rem}\label{remark:MvOfChildIsAlsoInfinite}
For any $v\in A^*$, we have $m_v = \max_{a\in A}m_{v\concat a}$. Therefore, if $m_v=\infty$, there exists $a\in A$ such that $m_{v\concat a} = \infty$.
\end{rem}

\begin{lemma}\label{lemma:ExistInfiniteMv}
If $S$ is infinite, $m_{\epsilon}=\infty$, where $\epsilon\in A^*$ is the empty word.
\end{lemma}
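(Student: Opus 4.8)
The plan is to prove the contrapositive: assuming $m_\epsilon = N < \infty$, so that every right-infinite word has $S$-orbit of size at most $N$, I will show that $S$ is finite. The first step is to push the orbit bound down to the finite levels of the tree. Given $u \in A^*$ and any $\sigma \in A^\omega$, the map sending $s\cdot(u\concat\sigma)$ to its length-$|u|$ prefix $s\cdot u$ is well defined (equal words have equal prefixes, and $s\cdot(u\concat\sigma) = (s\cdot u)\concat((s@u)\cdot\sigma)$) and carries $S\cdot(u\concat\sigma)$ onto $S\cdot u$; hence $|S\cdot u| \le m_\epsilon = N$ for \emph{every} finite word $u$. Writing $n = |Q|$ and letting $S_\ell$ denote the finite transformation semigroup induced by $S$ on the level set $A^\ell$, this says that each $S_\ell$ is generated by at most $n$ transformations and that all of its orbits have size at most $N$, uniformly in $\ell$.

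The crux is then a purely combinatorial statement, independent of $\ell$: an $n$-generated transformation semigroup $T$ acting on a set $X$ in which every orbit has size at most $N$ satisfies $|T| \le C(n,N)$ for a constant depending only on $n$ and $N$. To prove this I fix generators $g_1,\dots,g_n$ and, for each $x \in X$, consider the forward orbit $R(x) = \{t\cdot x : t \in T\}$, a set of size at most $N$ that is invariant under every $g_i$ since $g_i\cdot(t\cdot x) = (g_i t)\cdot x \in R(x)$. Any $t \in T$ is determined by the tuple $(t|_{R(x)})_{x\in X}$, and each restriction lies in the transformation semigroup $T_x = \langle g_1|_{R(x)},\dots,g_n|_{R(x)}\rangle$ on $R(x)$, of size at most $N^N$. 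I group the points $x$ by the isomorphism type of the marked transformation semigroup $(R(x); g_1|_{R(x)},\dots,g_n|_{R(x)})$ on an abstract set of size at most $N$; there are only finitely many such types, say $D = D(n,N)$ of them, because there are at most $\sum_{j\le N}(j^j)^n$ choices of $n$ transformations of a set of size at most $N$. For two points of the same type a fixed intertwining bijection conjugates $t|_{R(x)}$ to $t|_{R(x')}$ for every $t$, so the restrictions across all points of one type vary in lockstep. Consequently the injection $t \mapsto (t|_{R(x)})_x$ collapses to an embedding of $T$ into a product of at most $D$ factors, one copy of some $T_x$ per type, yielding $|T| \le (N^N)^{D} =: C(n,N)$.

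Applying this to each $S_\ell$ gives $|S_\ell| \le C(n,N)$ for all $\ell \in \NN$. Since an element of $S$ is an endomorphism of $A^*$, it is determined by its restrictions to all the level sets $A^\ell$, so $S$ embeds into the inverse limit of the $S_\ell$. As these are finite of uniformly bounded size, any collection of more than $C(n,N)$ distinct elements of $S$ would already be separated at some finite level $A^{\ell_0}$ (take the maximum of the levels distinguishing the finitely many pairs), forcing $|S_{\ell_0}| > C(n,N)$, a contradiction. Hence $|S| \le C(n,N) < \infty$, which establishes the contrapositive and therefore the lemma.

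The main obstacle I anticipate is the combinatorial lemma of the second paragraph, and in particular getting the bookkeeping right for semigroups, which need not contain an identity or any inverses: one must verify that the forward orbits $R(x)$ are genuinely $g_i$-invariant, that the restriction map is injective, and above all that the type/conjugation argument correctly collapses the a priori enormous product $\prod_{x\in X} T_x$ down to a bounded number of independent factors. By contrast, the level-wise orbit bound of the first paragraph and the passage from uniformly bounded finite levels to finiteness of $S$ in the third are routine once this lemma is available.
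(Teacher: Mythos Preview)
Your approach is essentially the paper's: both prove the contrapositive by showing that a finitely generated semigroup all of whose orbits are bounded must itself be finite. The paper outsources that implication to Proposition~3 of \cite{AngeliRoderoWachter17}, whereas you supply a self-contained proof via your ``type'' argument; the underlying idea (finitely many isomorphism types of generator-marked orbit actions, hence a bound independent of the ambient set) is the same one that proposition uses.

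One genuine wrinkle to fix: the assertion that $t\in T$ is determined by the tuple $(t|_{R(x)})_{x\in X}$ is not quite right, because $S$ is only a semigroup and need not contain the identity, so $x$ may fail to lie in its own forward orbit $R(x)=T\cdot x$. (Concretely, on $X=\{a,b,c\}$ take $g_1\colon a\mapsto b$, $g_2\colon a\mapsto c$, both fixing $b$ and $c$; then every $R(y)\subseteq\{b,c\}$ and $g_1,g_2$ agree on each $R(y)$ yet $g_1\neq g_2$.) The cure is exactly what you anticipate in your final paragraph: replace $R(x)$ by $R'(x)=\{x\}\cup R(x)$, which is still $g_i$-invariant and of size at most $N+1$; now $x\in R'(x)$, the restriction map is injective, and the type/lockstep argument goes through unchanged with bound $C(n,N+1)$. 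With that adjustment the proof is complete.
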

\begin{proof}
If $m_\epsilon < \infty$, then it follows from Remark \ref{remark:MvOfChildIsAlsoInfinite} that $m_v<\infty$ for all $v\in A^*$. Therefore, there must exist some $M\in \NN$ such that $m_v\leq M$ for all $v\in A^*$. Indeed, assume that this is not the case. We can then find a strictly increasing sequence $(m_{v_n})_{n\in \NN}$. As Remark \ref{remark:MvOfChildIsAlsoInfinite} implies that $m_\epsilon \geq m_{v_n}$ for all $n\in \NN$, we get $m_\epsilon = \infty$, a contradiction.

Hence, if $m_\epsilon<\infty$, then there exists some $M\in \NN$ such that $m_v \leq M$ for all $v\in A^*$. This means that the size of the orbits of the action of the semigroup $S$ on $A^\omega$ is bounded by $M$. Since $S$ is finitely generated, there are only finitely many different homomorphisms from $S$ to the endomorphisms of a set of size at most $M$. This, combined with the fact that the action of $S$ on $A^*$ is faithful, implies that $S$ is finite (a detailed proof of this fact is given in \cite{AngeliRoderoWachter17}, Proposition 3).
\end{proof}

For $v\in A^*$, let $\pi_v \colon S \rightarrow \text{End}(S\cdot v)$ be the homomorphism of semigroups given by $\pi_v(s)(w) = s\cdot w$ for all $w\in S\cdot v$. To shorten the notation, when there is no ambiguity, we will write $s\cdot $ for $\pi_v(s)$.

The set $S^{S\cdot v}$ of maps from $S\cdot v$ to $S$ becomes a semigroup when endowed with the pointwise product. There is a natural right action of $\text{End}(S\cdot v)$ on $S^{S\cdot v}$ given by precomposition. We can thus consider the semigroup $\text{End}(S\cdot v) \ltimes S^{S\cdot v}$.

\begin{prop}
For all $v\in A^*$, the map
\[
\psi_v\colon
\begin{cases}
S &\rightarrow \text{End}(S\cdot v) \ltimes S^{S\cdot v} \\
s&\mapsto (s\cdot, s@)
\end{cases}
\]
is a homomorphism (where $s@ \in S^{S\cdot v}$ is the map that sends $w$ to $s@w$).
\end{prop}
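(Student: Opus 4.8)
The plan is to verify $\psi_v(st) = \psi_v(s)\psi_v(t)$ directly, by comparing the two coordinates separately. First I would pin down the multiplication in the semidirect product. Since $\text{End}(S\cdot v)$ acts on the right of $S^{S\cdot v}$ by precomposition, the product of $(\phi_1,f_1)$ and $(\phi_2,f_2)$ in $\text{End}(S\cdot v)\ltimes S^{S\cdot v}$ is
\[
(\phi_1,f_1)(\phi_2,f_2) = \big(\phi_1\phi_2,\ (f_1\circ\phi_2)\,f_2\big),
\]
where $\phi_1\phi_2 = \phi_1\circ\phi_2$ is the composition in $\text{End}(S\cdot v)$, the map $f_1\circ\phi_2$ is the precomposition encoding the action, and the outer product is the pointwise product in $S^{S\cdot v}$. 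Specialising to $\psi_v(s)\psi_v(t) = (s\cdot,\,s@)(t\cdot,\,t@)$, the goal becomes showing that the first coordinate $(s\cdot)\circ(t\cdot)$ equals $(st)\cdot$ and the second coordinate $\big(s@\circ(t\cdot)\big)(t@)$ equals $(st)@$.

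For the first coordinate, I would note that $s\cdot$ is a well-defined endomorphism of $S\cdot v$: if $w = r\cdot v\in S\cdot v$ then $s\cdot w = (sr)\cdot v\in S\cdot v$. The equality $(s\cdot)\circ(t\cdot) = (st)\cdot$ is then exactly the action axiom $s\cdot(t\cdot w) = (st)\cdot w$ evaluated on $S\cdot v$, that is, the fact that $\pi_v$ is a homomorphism.

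The second coordinate carries the real content, and it is precisely what the semidirect product is set up to encode. Evaluating at an arbitrary $w\in S\cdot v$, I would invoke the identity $(st)@u = (s@(t\cdot u))(t@u)$ recorded in the preliminaries (valid for all $u\in A^*$, hence for $u=w$), which gives $(st)@w = (s@(t\cdot w))(t@w)$. On the other hand,
\[
\big((s@\circ(t\cdot))\,(t@)\big)(w) = (s@)(t\cdot w)\cdot(t@)(w) = (s@(t\cdot w))(t@w),
\]
where the first product is the pointwise product in $S^{S\cdot v}$ and the last products are taken in $S$. Since the two expressions agree for every $w$, the second coordinates coincide as elements of $S^{S\cdot v}$, completing the verification.

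I expect no genuine obstacle: once the multiplication in $\text{End}(S\cdot v)\ltimes S^{S\cdot v}$ is written down with the correct conventions, the homomorphism property reduces to two already-established identities, namely the action axiom for the first coordinate and the product formula $(st)@u = (s@(t\cdot u))(t@u)$ for the second. The only points requiring care are bookkeeping: checking that $s@$ restricted to $S\cdot v$ indeed takes values in $S$, so that it defines an element of $S^{S\cdot v}$, and making sure the precomposition is applied to the factor carrying the $s$-data composed with the $t$-action, so that $t\cdot w$ appears inside $s@$ exactly as in the product formula. These conventions are exactly what the semidirect product is engineered to track.
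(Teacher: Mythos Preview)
Your proposal is correct and follows essentially the same approach as the paper: both reduce the homomorphism check to the action identity $s\cdot(t\cdot w)=(st)\cdot w$ for the first coordinate and the formula $(st)@w=(s@(t\cdot w))(t@w)$ for the second, evaluated pointwise on $S\cdot v$. You are simply more explicit about the semidirect product multiplication and the well-definedness checks, whereas the paper compresses these into a short chain of equalities.
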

\begin{proof}
It is clear that $\psi_v$ maps the identity to the identity. Consider $s_1,s_2 \in S$. We have
\begin{align*}
\psi_v(s_1)\psi_v(s_2) &= (s_1\cdot, s_1@)(s_2\cdot, s_2@)\\
&= ((s_1s_2)\cdot, (s_1s_2)@)\\
&= \psi_v(s_1s_2).
\end{align*}
Indeed, for any $w\in S\cdot v$, we have
\begin{align*}
(s_1s_2)@w &= (s_1@(s_2\cdot w))(s_2@w) \\
&=(s_1@(s_2\cdot))(w)( s_2@)(w) \\
&=((s_1@(s_2\cdot)) (s_2@)) (w). \qedhere
\end{align*}
\end{proof}

\begin{rem}\label{remark:GeneratorsToGenerators}
For every $v\in A^*$, we have $\psi_v(Q) \subseteq \text{End}(S\cdot v) \times Q^{S\cdot v}$, a finite set.
\end{rem}

\begin{rem}\label{remark:HomomorphismThroughBijection}
For $v\in A^*$, if there exists a bijection $\varphi\colon S\cdot v \rightarrow B$ for some set $B$, then the map
\begin{align*}
\psi_v^{\varphi} \colon S &\rightarrow \text{End}(B)\ltimes S^B \\
s&\mapsto (\varphi \circ \pi_v(s)\circ \varphi^{-1}, s@\varphi^{-1})
\end{align*}
is a homomorphism.
\end{rem}

\begin{lemma}\label{lemma:SameHomomorphismImpliesSameMv}
Let $v_1, v_2 \in A^*$ be two words in the alphabet $A$. If there exists a bijection $\varphi \colon S\cdot v_1 \rightarrow S\cdot v_2$ such that $\varphi(v_1) = v_2$ and $\psi_{v_1}^{\varphi} = \psi_{v_2}$, then $m_{v_1\concat w} = m_{v_2\concat w}$ for all $w\in A^*$.
\end{lemma}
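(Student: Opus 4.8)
The plan is to unpack the hypothesis $\psi_{v_1}^{\varphi} = \psi_{v_2}$ into two pointwise statements and then compare orbits directly. Both $\psi_{v_1}^{\varphi}$ and $\psi_{v_2}$ map into $\text{End}(S\cdot v_2)\ltimes S^{S\cdot v_2}$, so I would write the equality componentwise. Equality of the first components, $\varphi\circ\pi_{v_1}(s)\circ\varphi^{-1}=\pi_{v_2}(s)$, says that $\varphi$ is $S$-equivariant: $\varphi(s\cdot u)=s\cdot\varphi(u)$ for all $u\in S\cdot v_1$. Equality of the second components, $s@\varphi^{-1}=s@$ as maps on $S\cdot v_2$, gives (after substituting $w=\varphi(u)$) that $s@u=s@\varphi(u)$ for all $u\in S\cdot v_1$. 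Evaluating both at $u=v_1$ and using $\varphi(v_1)=v_2$ isolates the two facts I will actually use: for every $s\in S$,
\[
\text{(A)}\quad \varphi(s\cdot v_1)=s\cdot v_2, \qquad\qquad \text{(B)}\quad s@v_1=s@v_2.
\]
(Notably, only the values at $u=v_1$ are needed, not full equivariance.)

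Next I would record the cocycle identity $s@(u\concat u')=(s@u)@u'$ for all $s\in S$ and $u,u'\in A^*$: for generators it is immediate from the recursive definition of $@$, and it propagates to products by a short induction using $(st)@u=(s@(t\cdot u))(t@u)$ together with $s\cdot(u\concat u')=(s\cdot u)\concat((s@u)\cdot u')$. Combining this with (B), an induction on $|w|$ shows $s@(v_1\concat w)=s@(v_2\concat w)$ for all $w\in A^*$ and $s\in S$: the base case is (B), and appending a letter $a$ amounts to applying the fixed map $@a$ to two already-equal elements, by the cocycle identity.

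The heart of the argument is then a direct orbit comparison for fixed $w\in A^*$ and $\xi\in A^\omega$. Using the cocycle identity I decompose, for $i=1,2$,
\[
s\cdot(v_i\concat w\concat\xi)=(s\cdot v_i)\concat\big((s@v_i)\cdot w\big)\concat\big((s@(v_i\concat w))\cdot\xi\big).
\]
By (B) the middle block $\rho_s:=(s@v_i)\cdot w$ and by the induction above the tail $\eta_s:=(s@(v_i\concat w))\cdot\xi$ are independent of $i$, while the heads satisfy $s\cdot v_2=\varphi(s\cdot v_1)$ by (A). Since $S$ acts by length-preserving maps, two elements $s,t\in S$ collide in the orbit of $v_1\concat w\concat\xi$ exactly when $s\cdot v_1=t\cdot v_1$, $\rho_s=\rho_t$ and $\eta_s=\eta_t$, and likewise for $v_2$ with $s\cdot v_1$ replaced by $\varphi(s\cdot v_1)$. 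Because $\varphi$ is a bijection, $s\cdot v_1=t\cdot v_1\iff\varphi(s\cdot v_1)=\varphi(t\cdot v_1)$, so the two collision relations on $S$ coincide. Hence $|S\cdot(v_1\concat w\concat\xi)|=|S\cdot(v_2\concat w\concat\xi)|$ for every $\xi$, and taking the supremum over $\xi$ yields $m_{v_1\concat w}=m_{v_2\concat w}$.

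I expect the main obstacle to be purely in reading off the two pointwise conditions from the semidirect-product identity $\psi_{v_1}^{\varphi}=\psi_{v_2}$ — in particular recognizing that the second coordinate forces the sections to agree, $s@v_1=s@v_2$, which is what makes the middle and tail blocks genuinely identical rather than merely conjugate. Once that is secured, the decomposition and the bijection-of-fibers step are routine; the only point requiring care is that $v_1$ and $v_2$ may have different lengths, which is harmless since the comparison is carried out on the index semigroup $S$ rather than on the words themselves.
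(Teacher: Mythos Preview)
Your proposal is correct and follows essentially the same approach as the paper: both unpack $\psi_{v_1}^{\varphi}=\psi_{v_2}$ into the two pointwise facts (A) and (B), then use the decomposition $s\cdot(v_i\concat\xi)=(s\cdot v_i)\concat((s@v_i)\cdot\xi)$ together with the bijectivity of $\varphi$ to show the collision relations on $S$ agree, giving equal orbit sizes. The only minor difference is that the paper proves directly that $|S\cdot(v_1\concat\xi)|=|S\cdot(v_2\concat\xi)|$ for every $\xi\in A^\omega$ and then observes the result for all $w$ is immediate (take $\xi=w\concat\zeta$), whereas you carry $w$ explicitly through a three-block decomposition; this extra step is harmless but unnecessary.
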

\begin{proof}
We will prove that for all $\xi \in A^\omega$, we have $|S\cdot (v_1\concat\xi)| = |S\cdot (v_2\concat\xi)|$. The result is then immediate.

Notice that if there exist $s_1,s_2 \in S$ such that $s_1\cdot (v_1\concat\xi) = s_2\cdot (v_1\concat\xi)$, then $s_1\cdot (v_2\concat\xi) = s_2\cdot (v_2\concat\xi)$. Indeed, 
\[s_i\cdot (v_1\concat\xi) = (s_i\cdot v_1)\concat((s_i@v_1) \cdot \xi).\]
Hence, $s_1\cdot v_1 = s_2 \cdot v_1$ and $(s_1@v_1) \cdot \xi = (s_2@v_1) \cdot \xi$.

By the hypothesis, we have $\varphi\circ \pi_{v_1}(s_i)\circ \varphi^{-1} = \pi_{v_2}(s_i)$. Hence,
\begin{align*}
s_1\cdot v_2 &= \pi_{v_2}(s_1)(v_2)\\
&= \varphi \circ \pi_{v_1}(s_1) \circ \varphi^{-1}(v_2) \\
&= \varphi( \pi_{v_1}(s_1) (v_1)) \\
&= \varphi (\pi_{v_1}(s_2) (v_1)) \\
&= \varphi \circ \pi_{v_1}(s_2) \circ \varphi^{-1} \\
&=s_2 \cdot v_2.
\end{align*}
Furthermore, since $s_i@\varphi^{-1}(w) = s_i@w$ for all $w\in S\cdot v_2$, we get
\[s_i@v_2 = s_i@\varphi^{-1}(v_1) = s_i@v_1.\]
Therefore, $(s_1@v_2)\cdot \xi = (s_2@v_2)\cdot \xi$. It follows that we have $s_1\cdot (v_2\concat\xi) = s_2\cdot (v_2\concat\xi)$.

We can thus define the map
\[
\lambda \colon
\begin{cases}
S\cdot (v_1\concat\xi) &\rightarrow S\cdot (v_2\concat\xi) \\
s\cdot (v_1\concat\xi) &\mapsto s\cdot (v_2\concat\xi).
\end{cases}
\]
It is clearly surjective, and by the symmetry of the previous argument, it must also be injective. This concludes the proof.
\end{proof}

\begin{thm}\label{thm:ExistInfiniteOrbit}
The semigroup $S$ is infinite if and only if there exists $\xi\in A^\omega$ such that $|S\cdot\xi| = \infty$.
\end{thm}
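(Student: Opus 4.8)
The easy direction is immediate: if $|S\cdot\xi|=\infty$ for some $\xi$, then the orbit map $s\mapsto s\cdot\xi$ is a surjection from $S$ onto the infinite set $S\cdot\xi$, so $S$ is infinite. For the converse I would combine Lemma~\ref{lemma:ExistInfiniteMv} with a growth argument built on Lemma~\ref{lemma:SameHomomorphismImpliesSameMv}. Throughout write $o(v)=|S\cdot v|$, which is finite since $S\cdot v\subseteq A^{|v|}$, and record two elementary facts first: $o$ is non-decreasing along the prefix order (the restriction $s\cdot v'\mapsto s\cdot v$ is a well-defined surjection whenever $v\leq v'$), and $|S\cdot\xi|=\infty$ if and only if $o(v)\to\infty$ along the prefixes $v\leq\xi$ (the partitions of $S$ induced by $s\cdot v=s'\cdot v$ refine as $v$ grows, so an unbounded number of prefix-classes forces infinitely many classes for $\xi$, while a bounded one forces stabilisation). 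Call $v$ \emph{rich} if $m_v=\infty$ and let $\mathcal T$ be the set of rich words; by Remark~\ref{remark:MvOfChildIsAlsoInfinite} it is prefix-closed and every node has a rich child, and by Lemma~\ref{lemma:ExistInfiniteMv} it contains $\epsilon$ once $S$ is infinite.

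The heart of the proof is a growth statement that I would isolate as a lemma: for every rich $u$ one has $\sup\{o(u'):u'\geq u,\ u'\in\mathcal T\}=\infty$. Granting this, the theorem follows by a greedy construction: starting from $u^0=\epsilon$, repeatedly use the growth statement to choose $u^{k+1}\in\mathcal T$ with $u^{k+1}\geq u^k$ and $o(u^{k+1})>o(u^k)$; the nested words $u^k$ determine a single $\xi\in A^\omega$ all of whose prefixes are rich and along which $o\to\infty$, whence $|S\cdot\xi|=\infty$.

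To prove the growth statement I argue by contradiction, and this is where Lemma~\ref{lemma:SameHomomorphismImpliesSameMv} does the work. Say $v\approx v'$ if there is a bijection $\varphi\colon S\cdot v\to S\cdot v'$ with $\varphi(v)=v'$ and $\psi_v^\varphi=\psi_{v'}$, so that Lemma~\ref{lemma:SameHomomorphismImpliesSameMv} gives $m_{v\concat w}=m_{v'\concat w}$ for all $w$; in particular the whole function $w\mapsto m_{v\concat w}$, and a fortiori $m_v$ itself, depends only on the $\approx$-class of $v$. The key finiteness input is that, for each fixed $K$, there are only finitely many $\approx$-classes among words $v$ with $o(v)\leq K$: fixing a bijection of $S\cdot v$ with $\{1,\dots,o(v)\}$ turns $\psi_v^\varphi$ (Remark~\ref{remark:HomomorphismThroughBijection}) into a homomorphism into $\text{End}(\{1,\dots,o(v)\})\ltimes S^{\{1,\dots,o(v)\}}$ that is determined by the images of the generators, and by the evident analogue of Remark~\ref{remark:GeneratorsToGenerators} these images lie in the finite set $\text{End}(\{1,\dots,o(v)\})\times Q^{\{1,\dots,o(v)\}}$. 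Now suppose the growth statement fails for some rich $u$, so that $K:=\sup\{o(u'):u'\geq u,\ u'\in\mathcal T\}<\infty$. Then the rich words above $u$ fall into finitely many $\approx$-classes, so their one-letter non-rich extensions $u'\concat a$ (the boundary of $\mathcal T$ above $u$) carry only finitely many values $m_{u'\concat a}$, each finite; let $M_0$ bound them. Every $w\geq u$ is either rich, whence $o(w)\leq K$, or lies above a first boundary node $b\leq w$, whence $o(w)\leq m_b\leq M_0$. Thus $o$ is bounded by $\max(K,M_0)$ on all descendants of $u$, giving $m_u=\sup_{w\geq u}o(w)<\infty$ and contradicting richness of $u$.

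The main obstacle is exactly this last lemma, and more precisely the realisation that a naive greedy choice of letters keeping $m=\infty$ is \emph{not} enough: it produces a word all of whose prefixes are rich but possibly with bounded orbits, and indeed for an abstract non-decreasing function on the tree $A^*$ one can have $m_\epsilon=\infty$ while every branch has bounded orbits. What rules this out is the rigidity coming from finiteness of the automaton: boundedness of $o$ on the rich subtree forces finitely many $\approx$-classes there, hence (via Lemma~\ref{lemma:SameHomomorphismImpliesSameMv}) a uniform bound on $m$ just outside it, hence a global orbit bound, which is incompatible with $m_u=\infty$. Two minor points I would check along the way are that $S$ is taken to contain the identity so that $v\in S\cdot v$ and the basepoint conditions make sense, and the analogue of Remark~\ref{remark:GeneratorsToGenerators} after transporting along $\varphi$ (using that the section of a generator is again a generator), both of which are routine.
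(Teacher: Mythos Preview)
Your proof is correct and follows essentially the same route as the paper's. The only difference is cosmetic: the paper organises the dichotomy as ``either there is a rich $w$ with $|S\cdot v|=|S\cdot w|$ for all $v\in T_w$, or one can greedily grow the orbit,'' using the explicit bijection $s\cdot v\mapsto s\cdot w$ to a fixed target $B=S\cdot w$, whereas you isolate the same contradiction as a growth lemma and transport to $\{1,\dots,o(v)\}$ instead; the finiteness of $\approx$-classes and the boundary bound via Lemma~\ref{lemma:SameHomomorphismImpliesSameMv} are identical in both.
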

\begin{proof}
It is clear that if there exists some $\xi\in A^\omega$ such that $|S\cdot \xi| = \infty$, then $S$ must be infinite. Let us prove the converse.

Suppose that $S$ is infinite. For each $u\in A^*$, we define the set
\[T_u = \left\{v\in A^* \mid v\geq u \text{ and } m_v=\infty\right\}.\]
It follows from Remark \ref{remark:MvOfChildIsAlsoInfinite} that $T_u\ne \emptyset$ if and only if $m_u=\infty$, and we know from Lemma \ref{lemma:ExistInfiniteMv} that $T_\epsilon \ne \emptyset$.

We may assume that there exists some $w\in A^*$ with $m_w = \infty$ and such that for all $v\in T_w$,
\[|S\cdot w| = |S\cdot v|.\]
Indeed, suppose that this is not the case. Then, for every $u\in A^*$ with $m_u=\infty$, there exists some $v\in T_u$ such that $|S\cdot u| < |S\cdot v|$. Thus, starting from $\epsilon$, which satisfies $m_\epsilon = \infty$, we can construct a strictly increasing sequence $(u_i)_{i\in\NN}$ of elements of $A^*$ such that $u_{i+1}\in T_{u_i}$ and $|S\cdot u_i| < |S\cdot u_{i+1}|$ for all $i\in\NN$. Since the sequence $(u_i)_{i\in \NN}$ is strictly increasing, there exists a unique element $\xi \in A^{\omega}$ such that $u_i$ is a prefix of $\xi$ for all $i\in \NN$. It follows from the fact that $|S\cdot u_i|<|S\cdot u_{i+1}|$ that the size of the orbit of $\xi$ is unbounded, which is what we were looking for. Thus, it suffices to consider the case where there exists some $w\in A^*$ as above.

In order to shorten the notation, let us write $B = S\cdot w$. For all $v\in T_w$, the map
\[
\varphi_v \colon
\begin{cases}
S\cdot v &\rightarrow B \\
s\cdot v &\mapsto s\cdot w
\end{cases}
\]
is a well-defined bijection between $S\cdot v$ and $B$ that satisfies $\varphi_v(v) = w$. The fact that it is well-defined  and surjective follows immediately from the fact that $v\geq w$, and it then must be a bijection, since $|S\cdot v| = |B|$.

We know from Lemma \ref{lemma:SameHomomorphismImpliesSameMv} that for all $a\in A$ and $v\in T_w$, the value of $m_{v\concat a}$ depends only on the homomorphism $\psi_v^{\varphi_v} \colon S \rightarrow \text{End}(B)\ltimes S^B$. However, there are only finitely many such homomorphisms. Indeed, a homomorphism from $S$ to $\text{End}(B)\ltimes S^B$ is uniquely determined by the image of the generating set $Q$, and since by Remark \ref{remark:GeneratorsToGenerators} we have $\psi_v^{\varphi_v}(Q) \subseteq \text{End}(B)\times Q^B$, we have only finitely many choices, hence finitely many such homomorphisms. Consequently, the set
\[K=\left\{m_{v\concat a}\in \NN \cup \{\infty\} \mid v\in T_w, a\in A\right\}\]
is finite. If $K\ne \{\infty\}$, set $M = \max \left(K \setminus \{\infty\}\right)$. Otherwise, set $M=|B|$. Notice that we always have $M\geq |B|$.

Let us consider $x\in w\concat A^*$. If $x\in T_w$, we have $|S\cdot x| = |B| \leq M$. If $x\notin T_w$, then it follows from Remark \ref{remark:MvOfChildIsAlsoInfinite} that there exists $v\in T_w$ and $a\in A$ such that $m_x \leq m_{v\concat a} < \infty$. Then, by the definition of $M$, we get $m_x\leq M$, which implies that $|S\cdot x| \leq M$. Thus, for all $x\in w\concat A^*$, we have $|S\cdot x| \leq M$. This implies that $m_w \leq M$, a contradiction.

We conclude that there must exist some $\xi \in A^\omega$ with infinite orbit.
\end{proof}

\section{Remarks and examples}\label{sec:Remarks}

It was pointed out to us by Ivan Mitrofanov that the proof of Theorem \ref{thm:ExistInfiniteOrbit} also works, with minor modifications, if we consider not only $S$ but any finitely generated subsemigroup of $S$. Thus, we get the following result:

\begin{thm}\label{thm:WorksForSubgroups}
Let $S$ be the semigroup generated by a finite state automaton $\mathcal{A}=(Q,A,\tau)$ and $T\leq S$ be a finitely generated subsemigroup of $S$. Then $T$ is infinite if and only if there exists $\xi\in A^\omega$ such that $|T\cdot \xi| = \infty$.
\end{thm}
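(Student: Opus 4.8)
The plan is to mirror the proof of Theorem \ref{thm:ExistInfiniteOrbit} essentially verbatim, replacing $S$ by $T$ throughout, and to check that every ingredient used there survives this substitution. The key observation is that the orbits $T\cdot v$ are subsets of the orbits $S\cdot v$, and all the finiteness arguments only ever needed the orbits to be \emph{finite} (bounded) and the acting semigroup to be \emph{finitely generated}; both hypotheses are available for $T$ by assumption.

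Concretely, I would first redefine $m_v = \sup_{\xi\in A^\omega}|T\cdot(v\concat\xi)|$ and observe that Remark \ref{remark:MvOfChildIsAlsoInfinite} holds unchanged, since it is a purely combinatorial statement about the tree of prefixes and the monotonicity $T\cdot(v\concat a\concat\xi')\subseteq T\cdot(v\concat\xi)$-type inclusions that hold for any semigroup of endomorphisms. Next, Lemma \ref{lemma:ExistInfiniteMv} goes through: if all $m_v$ were finite they would be uniformly bounded by some $M$, whence $T$ would act on each orbit as a subsemigroup of $\operatorname{End}$ of a set of size at most $M$; since $T$ is finitely generated there are only finitely many such homomorphisms, and the action of $T$ on $A^*$ is faithful (it is the restriction of the faithful action of $S$), so $T$ would be finite. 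The only point deserving a word is that faithfulness of the $T$-action is inherited from that of $S$, which is immediate. The homomorphisms $\psi_v$, $\psi_v^\varphi$ and Lemma \ref{lemma:SameHomomorphismImpliesSameMv} are defined and proved for $T$ exactly as before, now with $\operatorname{End}(T\cdot v)\ltimes T^{T\cdot v}$ in place of $\operatorname{End}(S\cdot v)\ltimes S^{S\cdot v}$.

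The one substantive check is the finiteness-of-homomorphisms argument in Theorem \ref{thm:ExistInfiniteOrbit}, which relied on Remark \ref{remark:GeneratorsToGenerators}: the images of the generators of $S$ land in $\operatorname{End}(S\cdot v)\times Q^{S\cdot v}$, a finite set. For $T$ this is where I expect the only genuine friction. If $T$ is generated by a finite set $R\subseteq S$, then the images $\psi_v^{\varphi_v}(R)$ must lie in a fixed finite set for the counting argument to close. The second coordinate of $\psi_v(r)$ is the map $w\mapsto r@w$, and while $q@w\in Q$ for a single generator $q$, a general element $r\in R$ satisfies only $r@w\in S$, not $r@w\in Q$. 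One resolves this by fixing a finite alphabet of words $F\subseteq A^*$ expressing each $r\in R$ as a product of states and noting that $r@w$ is then a product of at most $|F|$ states, so $r@w$ ranges over the finite set of products of length at most $\max_{r\in R}\ell(r)$ of elements of $Q$; equivalently, since each $r$ has bounded length as a word in $Q$, the section maps $r@\colon T\cdot v\to S$ take values in a single finite subset $Q^{\leq N}$ of $S$ independent of $v$. Thus $\psi_v^{\varphi_v}(R)\subseteq \operatorname{End}(B)\times (Q^{\leq N})^{B}$, again a finite set, and the counting argument proceeds as before.

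With this modification in hand, the remainder of the proof of Theorem \ref{thm:ExistInfiniteOrbit} transcribes directly: I would construct either a strictly increasing sequence of orbit sizes along a branch (yielding $\xi$ with $|T\cdot\xi|=\infty$) or, in the complementary case, a word $w$ with $m_w=\infty$ along which all orbit sizes in $T_w$ equal $|T\cdot w|$, derive a uniform bound $M$ on $|T\cdot x|$ for $x\in w\concat A^*$ from the finiteness of the set $K$ of section values $m_{v\concat a}$, and reach the contradiction $m_w\leq M$. The converse direction is trivial as before, since $|T\cdot\xi|=\infty$ forces $T$ infinite. Hence the main obstacle is purely bookkeeping about where the section maps $r@w$ take their values, and once one records that they range over a fixed finite subset of $S$, the entire argument is identical to the case $T=S$.
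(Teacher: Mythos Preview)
Your proposal is correct and follows the same approach as the paper, which likewise says to rerun the proof of Theorem \ref{thm:ExistInfiniteOrbit} with $T$ in place of $S$ and with the maps $\psi_v\colon T\to\operatorname{End}(T\cdot v)\ltimes S^{T\cdot v}$; the paper's one-line justification that ``there are only finitely many of these maps'' is exactly your observation that the sections $r@w$ of the finitely many generators $r\in R$ range over the finite set $Q^{\leq N}\subset S$. One small slip: you wrote the codomain as $\operatorname{End}(T\cdot v)\ltimes T^{T\cdot v}$, but as you yourself note immediately afterward, the sections $r@w$ lie in $S$ and not in $T$ in general, so the second factor must be $S^{T\cdot v}$ (or $(Q^{\leq N})^{T\cdot v}$), exactly as the paper has it.
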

\begin{proof}
The proof of Theorem \ref{thm:ExistInfiniteOrbit} yields the result if one replaces $S$ by $T$ and the maps $\psi_v\colon S \rightarrow \text{End}(S\cdot v)\ltimes S^{S\cdot v}$ by maps $\psi_v\colon T \rightarrow \text{End}(T\cdot v)\ltimes S^{T\cdot v}$. As $T$ is finitely generated, there are only finitely many of these maps (up to conjugation by bijections), so the argument goes through.
\end{proof}

Notice that in Theorem \ref{thm:WorksForSubgroups}, for the conclusion to hold, the number of states of the automaton has to be finite. If we allow an infinite number of states, then one can obtain finitely generated subgroups where every orbit is finite. For example, for the automaton described in Figure \ref{figure:SubsemigroupExample}, the subsemigroup generated by $x_0$ is infinite, but each of its orbits are finite.

\begin{figure}[h]
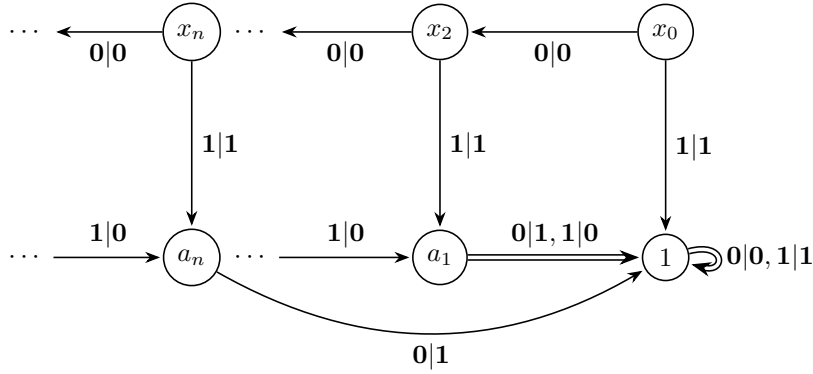

\centering

\[\begin{fsa}[baseline]
    \node[state] (x0) at (3,0) {$x_0$};
    \node[state] (x1) at (0,0) {$x_2$};
    \node[state] (xn) at (-3.3,0) {$x_n$};
    \node[state] (a1) at (0,-3) {$a_1$};
    \node[state] (an) at (-3.3,-3) {$a_n$};
    \node (l1) at (-2.5,0) {$\cdots$};
    \node (l2) at (-5.5, 0) {$\cdots$};
    \node (l3) at (-2.5,-3) {$\cdots$};
    \node (l4) at (-5.5, -3) {$\cdots$};
    \node[state] (e) at (3,-3) {$1$};
    \path (x0) edge node {$\8|\8$} (e)
    	  (x0) edge node {$\9|\9$} (x1)
    	  (x1) edge node {$\9|\9$} (l1)
    	  (x1) edge node {$\8|\8$} (a1)
    	  (xn) edge node {$\9|\9$} (l2)
    	  (a1) edge [double] node {$\9|\8, \8|\9$} (e)
    	  (l3) edge node {$\8|\9$} (a1)
    	  (an) edge [bend right] node [below] {$\9|\8$} (e)
    	  (l4) edge node {$\8|\9$} (an)
    	  (xn) edge node {$\8|\8$}(an)
          (e) edge [double,loop right] node {$\9|\9,\8|\8$} (e);
  \end{fsa}
\]
\caption{A Mealy automaton generating an infinite semigroup with a finitely generated infinite subsemigroup for which every orbit is finite.}\label{figure:SubsemigroupExample}
\end{figure}

We saw from the example of Figure \ref{figure:FiniteOrbits} that if we consider Mealy automata with an infinite number of states, it is possible for every orbit to be finite. However, in that example, the orbits were in fact bounded. Thus, it is natural to ask the following question: given a semigroup generated by a (not necessarily finite state) Mealy automaton, is it true that either there exists an infinite orbit or every orbit is bounded?

As can be seen in the following example, suggested to us by Laurent Bartholdi, it turns out that the answer to this question is negative.

\begin{example}
Let $A=\{0,1,2\}$ and $Q=\{e, a_{ij} \mid i\in \NN, 1\leq j \leq i^2\}$ be two sets, and let $\tau\colon Q\times A \rightarrow A\times Q$ be the map given by $\tau(e,x) = (x,e)$ for all $x\in A$ and
\begin{align*}
\tau(a_{ij},0) &= \begin{cases}
(1,a_{i(j-1)}) & \text{ if } j\equiv 1 \mod i\\
(0,e) & \text{ otherwise}
\end{cases} \\
\tau(a_{ij},1) &= \begin{cases}
(0,a_{i(j-1)}) & \text{ if } j\equiv 1 \mod i\\
(1,e) & \text{ otherwise}
\end{cases} \\
\tau(a_{ij},2) &= \begin{cases}
(2,e) & \text{ if } j\equiv 1 \mod i\\
(2,a_{i(j-1)}) & \text{ otherwise}
\end{cases}
\end{align*}
(where we set $a_{i0}=e$). We can then consider the automaton $\mathcal{A}=(Q,A,\tau)$. A part of its Moore diagram is represented in Figure \ref{figure:NonBoundedExample}.

\begin{figure}[h]
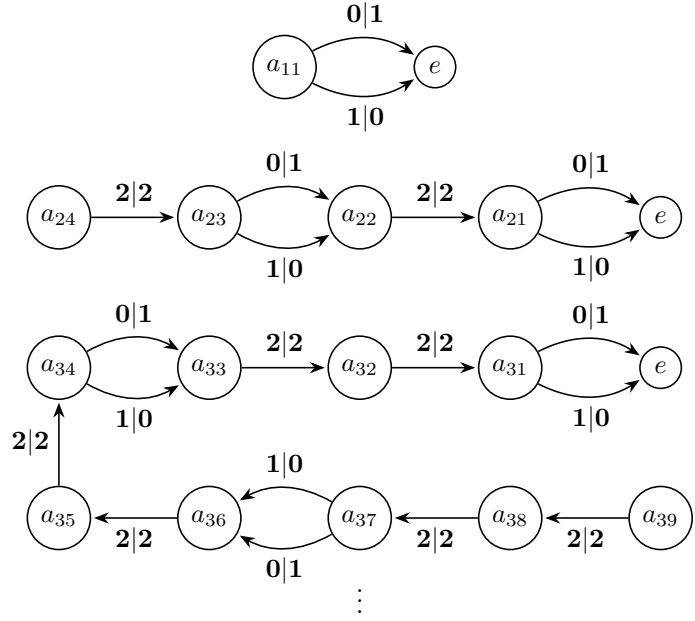

\centering

\[\begin{fsa}[baseline]
    \node[state] (e1) at (1,2) {$e$};
    \node[state] (a11) at (-1,2) {$a_{11}$};
    \node[state] (e2) at (4,0) {$e$};
    \node[state] (a21) at (2,0) {$a_{21}$};
    \node[state] (a22) at (0,0) {$a_{22}$};
    \node[state] (a23) at (-2,0) {$a_{23}$};
    \node[state] (a24) at (-4,0) {$a_{24}$};
    \node[state] (e3) at (4,-2) {$e$};
    \node[state] (a31) at (2,-2) {$a_{31}$};
    \node[state] (a32) at (0,-2) {$a_{32}$};
    \node[state] (a33) at (-2,-2) {$a_{33}$};
    \node[state] (a34) at (-4,-2) {$a_{34}$};
    \node[state] (a35) at (-4,-4) {$a_{35}$};
    \node[state] (a36) at (-2,-4) {$a_{36}$};
    \node[state] (a37) at (0,-4) {$a_{37}$};
    \node[state] (a38) at (2,-4) {$a_{38}$};
    \node[state] (a39) at (4,-4) {$a_{39}$};
    \node (d) at (0,-5) {$\vdots$};
    \path (a11) edge [bend right] node [below] {$\8|\9$} (e1)
    (a11) edge [bend left] node [above] {$\9|\8$} (e1)
    (a24) edge node {$\7|\7$} (a23)
    (a23) edge [bend right] node [below] {$\8|\9$} (a22)
    (a23) edge [bend left] node {$\9|\8$} (a22)
    (a22) edge node {$\7|\7$} (a21)
    (a21) edge [bend right] node [below] {$\8|\9$} (e2)
    (a21) edge [bend left] node [above] {$\9|\8$} (e2)
    (a39) edge node {$\7|\7$} (a38)
    (a38) edge node {$\7|\7$} (a37)
    (a37) edge [bend right] node [above] {$\8|\9$} (a36)
    (a37) edge [bend left] node {$\9|\8$} (a36)
    (a36) edge node {$\7|\7$} (a35)
    (a35) edge node {$\7|\7$} (a34)
    (a34) edge [bend right] node [below] {$\8|\9$} (a33)
    (a34) edge [bend left] node {$\9|\8$} (a33)
    (a33) edge node {$\7|\7$} (a32)
    (a32) edge node {$\7|\7$} (a31)
    (a31) edge [bend right] node [below] {$\8|\9$} (e3)
    (a31) edge [bend left] node [above] {$\9|\8$} (e3);
  \end{fsa}
\]
\caption{A part of the Moore diagram of a Mealy automaton where the orbits are finite but unbounded. To simplify the drawing, the state $e$ appears multiple times, and arrows that go from any state to the state $e$ by fixing a letter are not drawn.}\label{figure:NonBoundedExample}
\end{figure}

The semigroup $S$ generated by $\mathcal{A}$ is in fact a group, and since it is composed of finitary automorphisms of $A^\omega$, it is locally finite. We claim that all its orbits are finite but unbounded. To see this, let us first notice that, from the definition, for all $1\leq j \leq i^2$, all $n\in \NN$ such that $n\ne i$ and all $x,y\in \{0,1\}$, we have $a_{ij}@(x\concat 2^{n-1}\concat y) = e$. Therefore, for all $s\in S$, we have
\[s@(x\concat 2^{n-1}\concat y) \in \langle a_{n1}, a_{n2}, \dots, a_{nn^2} \rangle,\]
which is a finite subgroup of $S$.

Let us consider $\xi \in A^\omega$. There are two cases: either there exists some finite prefix $w$ of $\xi$ such that $w$ contains exactly two letters in the set $\{0,1\}$ or $\xi$ contains at most one letter in this set. In the second case, it is easy to see that the orbit of $\xi$ has size at most $2$, since elements of $S$ fix every occurence of the letter $2$ in any word.

Let us now assume that we are in the first case and let $n\in \NN$, $x,y\in \{0,1\}$ be such that $w$ contains the subword $x\concat 2^{n-1} \concat y$. Then, it follows from our previous remark that $s@w\in \langle a_{n1}, a_{n2}, \dots, a_{nn^2} \rangle$ for all $s\in S$. As $s\cdot \xi = (s\cdot w)\concat ((s@w)\cdot \xi')$, where $\xi=w\concat \xi'$, we conclude that the orbit of $\xi$ must be finite.

To see that the orbits of the action of $S$ are unbounded, it suffices to notice that $S$ acts transitively on the sets
\[V_{i} = \{a_1a_2\dots a_{i^2} \in A^{i^2} \mid a_k \in \{0,1\} \text{ if } k\equiv 1 \mod i, \quad a_k=2 \text{ otherwise}\}\]
for all $i\in \NN$.

\end{example}

\end{document}